\documentclass[reqno]{amsart}

\usepackage{amsmath,amssymb,amsthm,amsfonts,
	enumerate,hyperref,cleveref}

\hypersetup{
	colorlinks=true,
	linkcolor=blue,
	filecolor=magenta,      
	urlcolor=cyan,
	pdftitle={Overleaf Example},
	pdfpagemode=FullScreen,
}
\usepackage{dsfont}
\usepackage[all]{xy}
\usepackage[T1]{fontenc}
\usepackage{tikz}
\usepackage{pgfplots}
\pgfplotsset{compat=1.15}
\usepackage{mathrsfs}
\usetikzlibrary{arrows}
\newtheorem{lemma}{Lemma}[section]
\newtheorem{theorem}[lemma]{Theorem}
\newtheorem{proposition}[lemma]{Proposition}

\crefrangeformat{equation}{#3(#1)#4--#5(#2)#6}

\crefname{thrm}{Theorem}{Theorems}
\crefname{theorem}{Theorem}{Theorems}
\crefname{lem}{Lemma}{Lemmas}
\crefname{cor}{Corollary}{Corollaries}
\crefname{prop}{Proposition}{Propositions}
\crefname{defn}{Definition}{Definitions}
\crefname{exm}{Example}{Examples}
\crefname{rem}{Remark}{Remarks}
\crefname{conj}{Conjecture}{Conjectures}
\crefname{quest}{Question}{Questions}
\crefname{section}{Section}{Sections}
\crefname{equation}{\unskip}{\unskip}
\crefname{enumi}{\unskip}{\unskip}
\crefname{subsection}{Subsection}{Subsections}

\begin{document}
\title{A Le Page--Kaplansky theorem characterizing commutative JB$^*$-triples}	

\author[L. Li]{Lei Li}
\address[L. Li]{School of Mathematical Sciences and LPMC, Nankai University, 300071 Tianjin, China.}
\email{leilee@nankai.edu.cn}

\author[S. Liu]{Siyu Liu}
\address[S. Liu]{School of Mathematical Sciences and LPMC, Nankai University, 300071 Tianjin, China.}
\email{760659676@qq.com}

\author[A.M. Peralta]{Antonio M. Peralta}
\address[A.M. Peralta]{Instituto de Matem{\'a}ticas de la Universidad de Granada (IMAG), Departamento de An{\'a}lisis Matem{\'a}tico, Facultad de
	Ciencias, Universidad de Granada, 18071 Granada, Spain.}
\email{aperalta@ugr.es}

\subjclass[2010]{Primary 46J99; 47A30; 17C65 Secondary 46H70; 17A15}
\keywords{Le Page-type theorem, JB$^*$-triple, C$^*$-algebra, commutativity, JB$^*$-algebra, associativity.} 
	
\begin{abstract} We prove that a Le Page-type inequality is also valid for metrically characterizing those JB$^*$-triples that are commutative. More precisely, we establish that the following statements are equivalent for any JB$^*$-triple $E$:

$(a)$ $E$ is commutative. 

$(b)$ There exists $\gamma>0$ satisfying $$\big\|\{a,b,\{x,y,z\}\}\big\|\leq \gamma \ \! \big\|\{x,y,\{a,b,z\}\}\big\|, \hbox{ for all } a,b,x,y,z\in E.$$ 
\end{abstract}
	
	\maketitle
	
	
\section{Introduction} 

Le Page theorem is a starring result in the theory of associative normed algebras, it affirms that a normed unital associative complex algebra $A$ is commutative if, and only if, there exists $\gamma>0$ satisfying $\| a b \| \leq \gamma \ \! \| b a \|,$ for all $a,b\in A$ (see \cite{LePage}, \cite[Theorem 6.3.5]{CabRodVol2}, \cite[\S 2.1, Theorem 1]{Aupetit79} and the recent paper \cite{Pop25}). It is known that the conclusion fails when $A$ is not unital. Consider, for example, the matrices ${e_1} = \begin{pmatrix} 
	0 & 0 & 0 & 0 \\ 
	1 & 0 & 0 & 0 \\ 
	0 & 0 & 0 & 0 \\ 
	0 & 0 & 1 & 0 
\end{pmatrix},$  ${e_2} = \begin{pmatrix} 
	0 & 0 & 0 & 0 \\ 
	0 & 0 & 0 & 0 \\ 
	1 & 0 & 0 & 0 \\ 
	0 & -1 & 0 & 0 
\end{pmatrix}$, and $e_1 e_2 = \begin{pmatrix} 
0 & 0 & 0 & 0 \\ 
0 & 0 & 0 & 0 \\ 
0 & 0 & 0 & 0 \\ 
1 & 0 & 0 & 0 
\end{pmatrix} = - e_2 e_1$ in $M_{4} (\mathbb{C})$. It is easy to see that $A = \hbox{span}\{e_1,e_2, e_1 e_2\}$ is a three dimensional closed subalgebra of $M_4 (\mathbb{C})$ which is anti-commutative (i.e. $ a b = -b a$ for all $a,b\in A$) but not commutative. 
Therefore, $\| a b\| = \|-b a\| = \| b a \|$, for all $a,b\in A$.\smallskip

Another classical result, historically attribute to Kaplansky, assures that if $A$ is a (not necessarily unital) C$^*$-algebra, then $A$ is not commutative if, and only if, there exists a non-zero $x\in A$ with $x^2 =0$ (see \cite[2.12.21]{diximer77C*-algebras}). Note that in such a case, $x (x^* x )\neq 0$, while $(x^* x) x =0$.  So, the previous counterexample does not exist in the setting of C$^*$-algebras, and thus, for a C$^*$-algebra $A$ the existence of $\gamma>0$ satisfying $\| a b \| \leq \gamma \ \! \| b a \|,$ for all $a,b\in A$, is equivalent to the commutativity of $A$.\smallskip

A complex \emph{Jordan algebra} is a (non-necessarily associative) algebra $\mathfrak{A}$ whose product (denoted by $\circ$) is commutative, and instead of associativity, the product satisfies the \emph{Jordan identity}:  $(a \circ
b)\circ a^2 = a\circ (b \circ a^2)$ ($a,b\in \mathfrak{A}$). If $\mathfrak{A}$ admits a norm $\|. \|$ satisfying $\| a\circ b\| \leq \|a\| \ \|b\|$, $a,b\in \mathfrak{A}$, we say that $\mathfrak{A}$ is a complex normed Jordan algebra. We say that $\mathfrak{A}$ is unital if it admits an element $\mathbf{1}$ satisfying $\mathbf{1}\circ a = a$, for all $a\in \mathfrak{A}$. A Jordan algebra $\mathfrak{A}$ is called associative if its Jordan product is associative (i.e.,  $(a \circ
b)\circ c =  a \circ (b\circ c)$), in such a case $\mathfrak{A}$ is associative and commutative. Having in mind that every Jordan algebra is power-associative (i.e. all subalgebras generated by a single element are associative \cite[Proposition 2.4.19]{CabRodBookV1}), it follows from \cite[Proposition 6.3.40]{CabRodVol2} that a unital complex normed Jordan algebra $\mathfrak{A}$ is associative if, and only if, there exists $\gamma > 0$ satisfying $\| (a \circ
b)\circ c\| \leq \gamma\  \| a \circ (b\circ c)\|,$ for all $a, b, c \in \mathfrak{A}$. The equivalence also holds for not necessarily unital JB$^*$-algebras (Jordan alter ego of C$^*$-algebras). We recall that a \emph{JB$^*$-algebra} is a complex Jordan-Banach algebra $\mathfrak{A}$ equipped with an algebra involution ``$^*$'' satisfying  $\|U_a ({a^*})\| = \|a\|^3$, for all $a\in \mathfrak{A},$ where for $a,b\in \mathfrak{A}$ we set $U_a (b) =  2 (a\circ b) \circ a - a^2 \circ b$. Every C$^*$-algebra $A$ is a JB$^*$-algebra when we replace the original product with the natural product defined by $a \circ b = \frac12 (a b^* + b^* a)$ ($a,b\in A$). A \emph{JC$^*$-algebra} is a norm-closed self-adjoint Jordan subalgebra of a C$^*$-algebra. There are examples of JB$^*$-algebras which are not JC$^*$-algebras (cf. \cite[Example 3.1.56]{CabRodBookV1}).\smallskip

The following Jordan version of Kaplansky's theorem discussed above is due to Iochum, Loupias and Rodríguez-Palacios (see \cite[Theorem 1]{IoLouRod1989commutativity}).

\begin{theorem}\label{thm: associative JB*-algebra nilpotent}
	Let $\mathfrak{A}$ be a (not necessarily unital) JB$^*$-algebra. Then, the following statements are equivalent:
	\begin{enumerate}[$(a)$] \item $\mathfrak{A}$ is associative.
	\item $\mathfrak{A}$ does not contain $2$-nilpotent elements.
	\item There exists $\gamma >0$  satisfying $\| (a \circ
	b)\circ c\| \leq \gamma\ \! \| a \circ (b\circ c)\|,$ for all $a, b, c \in \mathfrak{A}$.
	\end{enumerate}
\end{theorem}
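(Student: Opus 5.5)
The plan is to prove the cycle $(a)\Rightarrow(c)\Rightarrow(b)\Rightarrow(a)$.

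The implication $(a)\Rightarrow(c)$ is immediate with $\gamma=1$: associativity gives $(a\circ b)\circ c = a\circ(b\circ c)$.

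For $(c)\Rightarrow(b)$, argue by contradiction. Suppose $x\in\mathfrak{A}$ is non-zero with $x^2=x\circ x=0$, and choose the triple $(a,b,c)=(x,x,y)$. The right-hand side of $(c)$ becomes $\|x\circ(x\circ y)\|$, while the left-hand side is $\|(x\circ x)\circ y\|=0$. That orientation is useless, so use $(a,b,c)=(y,x,x)$ instead, for which
\[
\|(y\circ x)\circ x\|\le\gamma\,\|y\circ(x\circ x)\|=0 .
\]
Hence $U$-type expressions must vanish: $(y\circ x)\circ x=0$ for every $y$. Since $x^2=0$, we get $U_x(y)=2(x\circ y)\circ x-x^2\circ y=0$ for all $y$. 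Taking $y=x^*$ gives $\|x\|^3=\|U_x(x^*)\|=0$, so $x=0$, which is a contradiction. This step is clean.

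The substantive implication is $(b)\Rightarrow(a)$. I would reduce to the bidual or to a suitable JBW$^*$-algebra setting, or more economically work with single elements.

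1. It suffices to show that every self-adjoint element $h$ is central, meaning that $L_h$ commutes with $L_k$ for all self-adjoint $k$; associativity then follows by linearity.
2. Let $h$ be self-adjoint. The closed subalgebra generated by $h$ is an associative commutative C$^*$-algebra $C_0(\sigma(h))$. In $\mathfrak{A}^{**}$, a JBW$^*$-algebra, one obtains spectral projections of $h$.
3. Suppose some projection $p$ (or, working locally, an element $u=f(h)$) fails to be central. Then the Peirce $1/2$-space $\mathfrak{A}_{1/2}(p)$ is non-zero. For self-adjoint $z\in\mathfrak{A}_{1/2}(p)$, the element $w=U_p(\cdot)$-type combination $z-2\,\mathrm{i}\,\ldots$ is intended to produce a square-zero element. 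Concretely, in a JC$^*$-picture with $z=pz(1-p)+(1-p)zp$, the element $pz(1-p)$ squares to zero. In Jordan terms, this element is $x=2\,p\circ(p\circ z)-\dots$, namely the Peirce projection $P_{1/2}$ of $z$ composed appropriately. I would use the Peirce-$1/2$ identity: $x=p\circ z+\mathrm{i}\,[\,\cdot\,]$ does not quite work in the Jordan setting, so instead use $x=\{p,z,1-p\}$-type triple products and check $x^2=0$ via Peirce rules, since $\mathfrak{A}_{1/2}\circ\mathfrak{A}_{1/2}\subseteq\mathfrak{A}_1+\mathfrak{A}_0$.

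The obstacle is that $\mathfrak{A}_{1/2}(p)$ by itself contains no obvious square-zero element in a genuinely exceptional algebra. The cleaner route would be to take two non-commuting self-adjoint elements, pass to the JB$^*$-subalgebra they generate, which is special (a JC$^*$-algebra by Shirshov–Cohn), and there build a non-zero element with $x^2=0$ in the Jordan sense. I expect this to be the main step. Moreover, the nilpotent element must lie in $\mathfrak{A}$ rather than in $\mathfrak{A}^{**}$. That requires approximating the projections by continuous functions $f(h)$ and $g(h)$ with $f g=0$, and taking $x=U_{f(h)}$-compressions in the form $f(h)\,k\,g(h)$ inside the C$^*$-envelope, symmetrized appropriately.
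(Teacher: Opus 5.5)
Your implications $(a)\Rightarrow(c)$ and $(c)\Rightarrow(b)$ are correct. The second is exactly the paper's argument: the paper gets $(c)\Rightarrow(a)$ by contradiction from a square-zero $a\neq0$, using $\|a\|^3=\|U_a(a^*)\|=2\|(a\circ a^*)\circ a\|$ while $a^*\circ(a\circ a)=0$, which is your computation with $y=x^*$.

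The genuine gap is $(b)\Rightarrow(a)$. This is the Iochum--Loupias--Rodr\'iguez-Palacios theorem, which the paper does not reprove but cites. Your step 3 is a plan, not a proof, and the concrete constructions in it cannot work.
\begin{itemize}
\item For a projection $p$ and self-adjoint $z$, any triple product $\{p,z,\mathbf{1}-p\}$ is self-adjoint. A self-adjoint element of a JB$^*$-algebra with zero square is zero, since it generates a copy of $C_0(\sigma)$. So no Peirce computation can make it a non-zero nilpotent, and the inclusion $\mathfrak{A}_{1/2}\circ\mathfrak{A}_{1/2}\subseteq\mathfrak{A}_1+\mathfrak{A}_0$ gives no vanishing in any case.
\item Take $u=f(h)$, $v=g(h)$ with $fg=0$. ``Symmetrizing'' $ukv$ to $ukv+vku=2\{u,k^*,v\}$ gives, in a C$^*$-envelope, the square $ukv^2ku+vku^2kv$, which is non-zero in general.
\end{itemize}

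What is missing is a Jordan-expressible nilpotent lying in $\mathfrak{A}$ itself, together with a local-to-global step. One way to get the nilpotent is the following.
\begin{itemize}
\item If $s,t$ are self-adjoint with $s\circ t=0$ and $s^2=t^2$, then $(s+\mathrm{i}t)^2=0$.
\item With $u,v$ as above and $z=z^*$, take $t=2\{u,z,v\}$ and $s=(U_uU_z(v^2))^{1/2}-(U_vU_z(u^2))^{1/2}$. Both lie in $\mathfrak{A}$ and satisfy these relations. You can check this in a C$^*$-envelope of the JC$^*$-algebra generated by $h,z$ (Shirshov--Cohn), using $|w^*|w=w|w|$ for $w=uzv$.
\item Since $t^2=ww^*+w^*w$, the equality $s+\mathrm{i}t=0$ forces $uzv=0$.
\item From $f(h)zg(h)=0$ for all $f,g$ with $fg=0$, spectral projections in the bidual of that envelope give $hz=zh$ there.
\end{itemize}
This only shows that $h$ and $z$ generate an associative subalgebra. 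Associativity of $\mathfrak{A}$ requires $[L_h,L_z]=0$ on all of $\mathfrak{A}$, and a third element may generate a non-special subalgebra together with $h,z$. So you also need that elements generating an associative JB-subalgebra operator commute. This can be proved via compatibility of their spectral projections in $\mathfrak{A}^{**}$ and Peirce arithmetic.

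None of this is in your proposal, so the cycle is not closed. The simplest repair is to cite Iochum--Loupias--Rodr\'iguez-Palacios for $(b)\Rightarrow(a)$, as the paper does.
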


As commented above, the equivalence $(a)\Leftrightarrow (b)$ is explicitly proved in \cite[Theorem 1]{IoLouRod1989commutativity} (see also \cite[Theorem 6.1.17]{CabRodVol2}), and $(a)\Rightarrow (c)$ is clear. Suppose, finally, that $(c)$ holds. If $\mathfrak{A}$ is not associative, by the equivalence $(a)\Leftrightarrow (b)$, there exists $0\neq a$ with $a^2=0$. By the axioms of JB$^*$-algebras we have $0\neq \|a\|^3 = \|U_{a} (a^*)\|$. Note that, by assumptions, we have $$ U_a (a^*) = 2 (a\circ a^*)\circ a - a^2 \circ a^* =  2 (a\circ a^*)\circ a.$$ Therefore $0\neq \| (a\circ a^*)\circ a \|$, while $\| (a\circ a)\circ a^*\| =0$, which contradicts the statement in $(c)$.\smallskip

As C$^*$-algebras define a strict subclass inside JB$^*$-algebras, the latter define a another strict subset within the wider setting of JB$^*$-triples. Formally speaking, a JB$^*$-triple is a complex Banach space $E$ equipped with a continuous triple product $\{\cdot,\cdot,\cdot\}:E\times E\times E\rightarrow E$, which is linear and symmetric in the outer variables and conjugate linear in the middle one, and satisfies the following conditions:
\begin{enumerate}[$(i)$]
	\item The (bounded linear) operator $L(a,b)$ on $E$ given by $L(a,b)(x)=\{a,b,x\}$ satisfies 
	$$L(a,b)L(x,y)-L(x,y)L(a,b)=L(L(a,b)x,y)-L(x,L(b,a)y),$$ for all $a,b,x,y\in E$. \hfill (Jordan identity)
	\item For each $a\in E$, $L(a,a)$ is a hermitian operator $\big($i.e., $\left\| e^{i t L(a,a)} \right\|=1$ for all real $t \big)$ with non-negative spectrum.
	\item $\|\{a,a,a\}\|=\|a\|^3$ for every $a\in E$. \hfill (extended Gelfand--Naimark axiom)
\end{enumerate} C$^*$-algebras and JB$^*$-algebras can be naturally viewed as JB$^*$-triples for the triple products defined by $\{x,y,z\} :=\frac12 (x y^* z+ z y^*x)$ and $\{x,y,z\} := (x\circ y^*) \circ z + (z\circ y^*)\circ x - (x\circ z)\circ y^*$, respectively (cf. \cite{Kaup1983riemann} and \cite[\S 2.2.27 and \S 4.1.3]{CabRodBookV1}).\smallskip

A JB$^*$-triple $E$ is called \emph{commutative} or \emph{abelian} if $$[L(a,b),L(x,y)]=L(a,b)L(x,y)-L(x,y)L(a,b)=0,$$ for all $a,b,x,y\in E,$ that is, $L(a,b)$ and $L(x,y)$ commute in the associative Banach algebra $B(E)$ of all bounded linear operator on $E$ (cf. \cite{Kaup77MathAnn,Kaup1983riemann, hornIdeals, FriRuCommutative}, \cite[\S 4]{DiTi88}, and more recently \cite[\S 2]{CaPe24}). One of the founding results in JB$^*$-triple theory, established by Kaup in \cite[\S 1]{Kaup1983riemann} (see also \cite[\S 4.2.1]{CabRodBookV1}), shows how every commutative JB$^*$-triple $E$ identifies, via an isometric triple isomorphism (i.e., a linear bijection preserving triple products), with the closed subtriple of the function space $C_0(L)$ of all $\mathbb{T}$-homogeneous (or $\mathbb{T}$-equivariant) continuous functions on a principal $\mathbb{T}$-bundle $L$ (i.e., a subset $L$ of a locally convex Hausdorff complex linear space such that $\mathbb{T} L = L$, $0 \notin L$ and $L \cup \{0\}$ is compact). Concretely, 
\[E\cong C_0^\mathbb{T}(L):=\big\{a\in C_0(L):a(\lambda t)=\lambda a(t)\text{ for every } (\lambda,t)\in\mathbb{T}\times L\big\},\] where the latter is equipped with the supremum norm and the triple product given by $\{a,b,c\} = a \overline{b} c$. Every commutative C$^*$-algebra is a commutative JB$^*$-triple (cf. \cite[Proposition 10]{Ol75} or \cite[Lemma 3.1]{FriedmanRusso82TAMS}), but the class of commutative JB$^*$-triples is strictly wider (see \cite[Corollary 1.13]{Kaup1983riemann}).\smallskip

Clearly, every commutative JB$^*$-triple $E$ satisfies $$\big\|\{a,b,\{x,y,z\}\}\big\| = \big\|\{x,y,\{a,b,z\}\}\big\|, \hbox{ for all } a,b,x,y,z\in E. $$ What is entirely new is the question of whether a Le Page-Kaplansky type theorem holds in the setting of JB$^*$-triples. In this work, we prove that when the norm of a JB$^*$-triple satisfies a Le Page-type inequality, the JB$^*$-triple is commutative. The main result is the following theorem.

\begin{theorem}\label{t characterization of commutative JB*-triples} Let $E$ be a JB$^*$-triple. Then, the following are equivalent:
	\begin{enumerate}[$(a)$]
		\item $E$ is commutative.
		\item There exists $\gamma>0$ satisfying \begin{equation}\label{eq Le Page triples}
			\big\|\{a,b,\{x,y,z\}\}\big\|\leq \gamma \ \! \big\|\{x,y,\{a,b,z\}\}\big\|, \hbox{ for all } a,b,x,y,z\in E.
		\end{equation} 
	\end{enumerate}
\end{theorem} 

That is, a Le Page inequality involving five elements characterises commutative JB$^*$-triples. The theorem will be proved in Section~\ref{sec: commutative JB*-triples}. Before discussing the other contents of this work, we devote a few thoughts to the innovative and novel nature of this viewpoint, which makes the result appear unrelated to and inaccessible from the theorems of Le Page, Kaplansky, and Iochum–Loupias–Rodríguez‑Palacios, as well as from the usual arguments used in their proofs. 
In the setting of JB$^*$-triples, besides the absence of a binary product, exponentials, commutators, and associators, the natural strategy of embedding the problem into the associative Banach algebra $B(E)$ to apply the classical Le Page theorem is likewise not feasible. More concretely, the inequality in \eqref{eq Le Page triples} implies that $\big\| L(a,b) L(x,y)\big\| \leq \gamma \ \! \big\| L(x,y) L(a,b) \big\|$ for all $a,b,x,y\in E$. However, the Jordan identity only implies that the linear span of the set $L(E,E) := \{L(a,b) : a,b\in E\}$ is a complex Lie subalgebra of the associative Banach algebra $B(E)$ (with respect to the commutator product), but not an associative subalgebra. \smallskip

Returning to the structure of the paper, in Section~\ref{sec: inner ideals} we begin by showing that Theorem~\ref{t characterization of commutative JB*-triples} admits a simpler proof under the stronger hypothesis that $E$ is a JBW$^*$-triple (see Theorem~\ref{t characterization of commutative JBW*-triples}). In the general setting we prove that in a JB$^*$-triple $E$ satisfying the Le Page-type  inequality in \eqref{eq Le Page triples}, every inner ideal generated by a single element in $E$ must be an associative JB$^*$-algebra or equivalently, a commutative C$^*$-algebra (see Proposition~\ref{p inner ideal single generated is commutative}). Finally, we combine geometric-algebraic tools derived from the Sait\^o-Tomita-Lusin theorem for JB$^*$-triples \cite{BuFerMarPe2006} and a recent characterization of commutativity for JB$^*$-triples in terms of inner ideals established in \cite[\S 2]{CaPe24} to establish our main result.

\section{Inner ideals and commutative JBW$^*$-triples}\label{sec: inner ideals}

As commented above, the proof of the general statement in Theorem~\ref{t characterization of commutative JB*-triples} will require a sophisticated argument involving deep algebraic-geometric tools. Nevertheless, the arguments are simpler under the stronger hypothesis that $E$ is a JBW$^*$-triple (a JB$^*$-triple which is also a dual Banach space). \smallskip

We recall first some essential background in JB$^*$-triple theory. As commented in the introduction, the class of JB$^*$-triples is strictly wider than its subclasses determined by all C$^*$-algebras and all JB$^*$-algebras, where the last two classes are actually strictly ordered by inclusion. There are however many natural ways to embed JB$^*$-algebras inside a fixed JB$^*$-triple $E$. The first one is given by the Peirce 2-subspace associated with a tripotent $e$ (i.e. an element $e$ satisfying $\{e,e,e\}=e$). The eigenvalues of the operator $L(e,e)$ are contained in the set $\{0,1,2\}$, and the corresponding eigenspaces $E_j (e)$ ($j= 0,1,2$) induce a decomposition (known as the Peirce decomposition of $E$ with respect to $e$) in the form $$ E = E_{0} (e) \oplus E_{1} (e) \oplus
E_{2} (e) 
.$$ A tripotent $e\in E$ is called complete if $E_0(e)= \{0\}$. The so-called Peirce subspaces $E_j (e)$ ($j= 0,1,2$) are all JB$^*$-subtriples of $E$ and obey certain rules for the triple products among their elements, namely, 
\begin{equation*}
	\textit{(Peirce arithmetic) \ \ }\left\{
	\begin{aligned}
		& \{ {E_{i}(e)},{E_{j}(e)},{E_{k}(e)} \} \subseteq E_{i-j+k}(e),\ \hbox{
			if  } i-j+k\in\{0,1,2\}, \\ 
		& \{ {E_{i}(e)},{E_{j}(e)}, {E_{k}(e)} \} =\{0\}, \hbox{ for } i-j+k\neq 0,1,2, \\
		& \{ {E_0 (e)},{E_2 (e)},{E}\} = \{ {E_2 (e)},{E_0 (e)},{E }\} =\{0\}.
	\end{aligned}
	\right.
\end{equation*}
 For the purposes of this paper we remark that by defining a Jordan product and involution on $E_2 (e)$ by $x\circ_{e} y := \{x,e,y\}$ and $x^{{*}_{e}} := \{e,x,e\}$, respectively, it turns out that $(E_2(e), \circ_e, *_{e})$ is a unital JB$^*$-algebra with unit $e$ (cf. \cite[\S 4.2.2]{CabRodBookV1}).\smallskip
	
A general JB$^*$-triple may contain no non-zero tripotents. However, since the extreme points of the closed unit ball of each JB$^*$-triple $E$ coincide with the complete tripotents in $E$  (see \cite[Theorem 4.2.34]{CabRodBookV1}), every JBW$^*$-triple contains an abundant collection of complete tripotents.\smallskip
	
Concerning commutativity, Theorem 3 in \cite{CaPe24} proves that a JBW$^*$-triple $W$ is commutative if, and only if, for each complete tripotent $e\in W$ we have $L(e,e) (W) \subseteq W_2(e),$ equivalently, $W_1 (e)=\{0\}$ (cf. \cite[Theorem 3]{CaPe24}). We shall employ this characterization to establish the next version of Theorem~\ref{t characterization of commutative JB*-triples} for JBW$^*$-triples.

\begin{theorem}\label{t characterization of commutative JBW*-triples} Let $W$ be a JBW$^*$-triple. Then, the following are equivalent:
	\begin{enumerate}[$(a)$]
		\item $W$ is commutative.
		\item There exists $\gamma>0$ satisfying \begin{equation}\label{eq Le Page JBWstar-triples}
			\|\{a,b,\{x,y,z\}\}\|\leq \gamma \ \|\{x,y,\{a,b,z\}\}\|, \hbox{ for all } a,b,x,y,z\in W.
		\end{equation} 
	\end{enumerate}
\end{theorem}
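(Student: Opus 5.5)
The implication $(a)\Rightarrow(b)$ holds with $\gamma=1$, since commutativity gives $L(a,b)L(x,y)=L(x,y)L(a,b)$ and hence $\{a,b,\{x,y,z\}\}=\{x,y,\{a,b,z\}\}$. For $(b)\Rightarrow(a)$ we use the characterization from \cite[Theorem 3]{CaPe24} quoted above: it suffices to show that $W_1(e)=\{0\}$ for every complete tripotent $e\in W$. We argue by contradiction. Fix a complete tripotent $e$ and suppose there is $0\neq u\in W_1(e)$. The aim is to choose five elements so that one side of \eqref{eq Le Page JBWstar-triples} vanishes by Peirce arithmetic while the other is nonzero.

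The natural choice is $x=y=e$ and $z=u$, with $a,b$ chosen to exploit the asymmetry between the two orders. Take $a=u$ and $b=e$. Then
\[
\{x,y,\{a,b,z\}\}=\{e,e,\{u,e,u\}\}.
\]
By Peirce arithmetic, $\{u,e,u\}\in W_{1-2+1}(e)=W_0(e)=\{0\}$, since $e$ is complete. Hence the right-hand side of \eqref{eq Le Page JBWstar-triples} is zero. On the other hand,
\[
\{a,b,\{x,y,z\}\}=\{u,e,\{e,e,u\}\}=\{u,e,u\}=0
\]
as well, so this choice fails. We therefore swap the roles of the arguments so that $u$ sits in the middle slot. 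Take $a=e$, $b=u$, $x=y=e$, $z=e$. Then $\{a,b,z\}=\{e,u,e\}\in W_{2-1+2}$, which vanishes because $3\notin\{0,1,2\}$. So the right-hand side $\{e,e,\{e,u,e\}\}$ is $0$. The left-hand side is $\{e,u,\{e,e,e\}\}=\{e,u,e\}=0$, which also fails.

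A better attempt uses $z=u$, $x=y=u$ and $a=b=e$. Then
\[
\{x,y,\{a,b,z\}\}=\{u,u,\{e,e,u\}\}=\{u,u,u\},
\]
whose norm is $\|u\|^3\neq0$. Meanwhile
\[
\{a,b,\{x,y,z\}\}=\{e,e,\{u,u,u\}\}=\{u,u,u\},
\]
because $\{u,u,u\}\in W_{1-1+1}=W_1(e)$ and $L(e,e)$ acts as the identity on $W_1(e)$. The two sides are equal, so this choice gives no contradiction either.

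This identifies the main obstacle: finding a configuration where the two orders genuinely differ. What I would try first is $z=e$, $a=b=u$ and $x=y=e$. Then
\[
\{a,b,\{x,y,z\}\}=\{u,u,e\},
\]
while
\[
\{x,y,\{a,b,z\}\}=\{e,e,\{u,u,e\}\}.
\]
Writing $\{u,u,e\}=P_2+P_0$ with $P_j\in W_j(e)$, and using $W_0(e)=\{0\}$, we get $\{u,u,e\}\in W_2(e)$, on which $L(e,e)=2$. So the right-hand side equals $2\{u,u,e\}$, and again no contradiction arises.

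Since these symmetric choices do not separate the two sides, the plan is to pass to a reducing tripotent. Using the JBW$^*$ structure, pick a nonzero tripotent $v$, which exists because $W_1(e)$ is a weak$^*$-closed subtriple and hence itself a JBW$^*$-triple. Choose $v$ with $v\in W_1(e)$, so that $e$ and $v$ are non-orthogonal and in a ``rotated'' position. Then compare $\{v,v,\{e,e,v\}\}=v$ with $\{e,e,\{v,v,v\}\}=v$, or more promisingly work inside the JB$^*$-algebra $W_2(f)$ for a suitable tripotent $f$, where the Iochum--Loupias--Rodr\'iguez-Palacios Theorem~\ref{thm: associative JB*-algebra nilpotent} applies. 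Concretely:
\begin{enumerate}[$(1)$]
\item Show that \eqref{eq Le Page JBWstar-triples} restricted to $W_2(f)$, with $b=f$ and $y=$ suitable elements, yields $\|(a\circ_f c)\circ_f d\|\le\gamma'\|a\circ_f(c\circ_f d)\|$.
\item Deduce from Theorem~\ref{thm: associative JB*-algebra nilpotent} that every $W_2(f)$ is associative.
\item Take $f=e+v$-type combinations, or use the fact that a nonzero $W_1(e)$ produces a tripotent $f$ whose Peirce-2 space contains a spin factor or $M_2(\mathbb{C})$ piece (e.g.\ $e$ and $v$ generate a copy of $M_2$ or $2\times1$ matrices), which is non-associative.
\end{enumerate}
The contradiction in step $(3)$ is the step I expect to be the hardest.
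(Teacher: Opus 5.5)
\textbf{There is a genuine gap.} Your proof of $(a)\Rightarrow(b)$ is fine, and so is your reduction of $(b)\Rightarrow(a)$ to showing $W_1(e)=\{0\}$ via \cite[Theorem 3]{CaPe24}. But the proof of $(b)\Rightarrow(a)$ is never completed, and your fallback plan cannot be completed.

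Steps $(1)$--$(2)$ are correct. With $b=y=f$ and $a,x,z\in W_2(f)$, the inequality reads $\|a\circ_f(x\circ_f z)\|\leq\gamma\|x\circ_f(a\circ_f z)\|$. After using commutativity of $\circ_f$, this is condition $(c)$ of Theorem~\ref{thm: associative JB*-algebra nilpotent}.

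Step $(3)$, however, is false, and your own example of $2\times 1$ matrices is a counterexample. Take a Hilbert space $H$ with $\dim H\geq 2$ and triple product $\{x,y,z\}=\frac12(\langle x|y\rangle z+\langle z|y\rangle x)$. Every non-zero tripotent $f$ is a unit vector with $H_2(f)=\mathbb{C}f$, which is associative. Yet for $\|e\|=1$ one has $H_1(e)=\{e\}^{\perp}\neq\{0\}$, so $H$ is not commutative. Hence no argument based only on associativity of Peirce-$2$ spaces (or of single-generated inner ideals; compare the remark after Proposition~\ref{p inner ideal single generated is commutative}) can detect $W_1(e)\neq\{0\}$. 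The contradiction has to come from a configuration that mixes $e$ and $u$ across different Peirce spaces.

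The missing idea is to put $u$ in the conjugate-linear slot of one operator and in an outer slot of the other. This is exactly what the paper does. Take $(a,b,x,y,z)=(e,u,u,e,e)$.
\begin{itemize}
\item Right-hand side: $\{e,u,e\}=0$ by Peirce arithmetic, since $2-1+2=3\notin\{0,1,2\}$. So $\{x,y,\{a,b,z\}\}=\{u,e,\{e,u,e\}\}=0$.
\item Left-hand side: $\{x,y,z\}=\{u,e,e\}=L(e,e)u=\frac12 u$, so $\{a,b,\{x,y,z\}\}=\frac12\{e,u,u\}=\frac12\{u,u,e\}$.
\item This is non-zero, because $\|u\|^2\leq 4\|\{u,u,e\}\|$ by \cite[Proposition 2.4]{BuFerMarPe2006}.
\end{itemize}
So \eqref{eq Le Page JBWstar-triples} fails for this choice.

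Your second attempt $(e,u,e,e,e)$ already had the vanishing right-hand side; you only needed to change $x$ from $e$ to $u$. The vanishing comes from the Peirce index $3$, not from completeness of $e$. Also, $L(e,e)$ acts on $W_j(e)$ as $\frac{j}{2}$, not as $j$. That slip does not change the outcome of your test computations, but the factor $\frac12$ is what appears in the correct computation above.
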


\begin{proof} Suppose $e$ is a (complete) tripotent in $W$. If there exists a non-zero $x$ in $W_1 (e)$, it can be easily checked via Peirce arithmetic that  $$\{x,e,\{e,x,e\}\} = \{e,x, 0 \} = 0,$$ and  $$\{e,x,\{x,e,e\}\} = \left\{e,x,\frac12 x\right\} = \frac12 \{e,x,x\}.$$ Now, an application of \cite[Proposition 2.4]{BuFerMarPe2006} assures that $$0<\|x\|^2 \leq 4 \left\|\{x,x,e\} \right\|.$$ However, by hypothesis \eqref{eq Le Page JBWstar-triples}, we have $$0< \frac12 \left\| \{e,x,x\} \right\|= \left\| \{e,x,\{x,e,e\}\} \right\| \leq \gamma^2 \left\| \{x,e,\{e,x,e\}\}  \right\|=0, $$ which is impossible. Therefore, $W_1 (e) = \{0\}$ for all tripotent $e\in W$. Theorem 3 in \cite{CaPe24} assures that $W$ is commutative.
\end{proof}

Let us make some comments. First, a general JB$^*$-triple may contains no non-trivial tripotents, and the characterization of commutativity in \cite[Theorem 3]{CaPe24} makes no sense for general JB$^*$-triples. Second, Theorem~\ref{t characterization of commutative JB*-triples} does not follow from Theorem~\ref{t characterization of commutative JBW*-triples}. Although the bidual, $E^{**}$, of a JB$^*$-triple $E$ is a JBW$^*$-triple \cite{Di86}, the existence of positive $\gamma$ for which the inequality in \eqref{eq Le Page JBWstar-triples} holds for all $a,b,x,y,z\in E$ does not necessarily imply the validity of the inequality for all $a,b,x,y,z\in E^{**}$ (even applying the known facts that the triple product of $E^{**}$ is separately weak$^*$-continuous \cite{BartonTimoney1986weak} and $E$ is weak$^*$-dense in $E^{**}$).\smallskip

We shall also make use of the connections between surjective linear isometries and triple isomorphisms. A celebrated result by Kadison shows that a unital linear bijection between two C$^*$-algebras is an isometry if, and only if, it is a Jordan $^*$-isomorphisms \cite[Theorems 5 and 7]{Kad1951}. A surjective linear isometry between C$^*$-algebras which is not necessarily unital need not preserve associative or Jordan products. Nevertheless, surjective linear isometries can be characterized by an algebraic property, even in the wider setting of JB$^*$-triples, since a surjective linear mapping between JB$^*$-triples is an isometry if, and only if, it preserves triple products (cf. \cite[Proposition 5.5]{Kaup1983riemann}).\smallskip

The arguments in the proof of Theorem~\ref{t characterization of commutative JBW*-triples} actually reveal some connections between the Le Page-type inequality in \eqref{eq Le Page JBWstar-triples} with elements in the ``centroid'' of a JB$^*$-triple. Following \cite{DiTi88}, we define the \emph{centroid}, $Z(E)$, of a JB$^*$-triple $E$ as the set of all continuous linear operators $T \in B(E)$ satisfying:
$$ T(\{x,y,z\}) = \{T(x), y, z\}, \qquad \text{for all } x,y,z \in E.$$ In case that $A$ is a C$^*$-algebra (respectively, a JB$^*$-algebra) regarded as a JB$^*$-triple, the centroid of $A$ is precisely the collection of all left or right (respectively, Jordan) multiplication operator by a fixed element in the centre of $A$ (cf. \cite[Proposition 3.5]{DiTi88}). For example, a projection (i.e. a symmetric idempotent) $p$ in a JB$^*$-algebra $A$ is central (i.e., it operator commutes with every element in $A$) if, and only if, $M_p : A\to A$, $M_p (x) = p\circ x$ is an element in the centroid of $A$ when the latter is regarded as a JB$^*$-triple. Note that every projection $p\in A$ is a tripotent, and it is central if, and only if, $A_1 (p) =\{0\}$ (cf. \cite[2.5.7]{hanche-olsen84jordan}). This is the motivation to obtain the following extension of \cite[Lemma 5]{Pop25} to the setting of JB$^*$triples.

\begin{lemma}\label{l centroid tripotents} Let $e$ be a tripotent in a JB$^*$-triple $E$. Then the following statements are equivalent:\begin{enumerate}[$(a)$]\item $E_1 (e) =\{0\}$ {\rm(}equivalently, $L(e,e)= P_2(e)${\rm)}.
\item $L(e,e)$ is an element in the centroid of $E$.
\item There exists a positive $\gamma$ satisfying $$ \big\| L(e,x) L(x,e) (e) \big\| \leq \gamma \ \! \big\| L(x,e)  L(e,x) (e)  \big\|, \hbox{for all } x\in E.$$
\end{enumerate} 
\end{lemma}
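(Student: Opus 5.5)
The plan is to prove $(a)\Rightarrow(b)\Rightarrow(a)$ and $(a)\Rightarrow(c)\Rightarrow(a)$. Throughout, Peirce arithmetic is used in the normalisation that appears in the proof of Theorem~\ref{t characterization of commutative JBW*-triples}: $L(e,e)$ acts as $\frac{j}{2}\,\mathrm{Id}$ on $E_j(e)$. In particular $L(e,e)x=\frac12 x$ for $x\in E_1(e)$, and $L(e,e)=P_2(e)$ exactly when $E_1(e)=\{0\}$.

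\emph{$(a)\Rightarrow(b)$.} Assume $E=E_0(e)\oplus E_2(e)$ and write $x=x_0+x_2$, and similarly for $y$ and $z$. Each mixed product vanishes. Indeed, $\{E_0(e),E_2(e),E\}=\{E_2(e),E_0(e),E\}=\{0\}$. Moreover $\{x_2,y_2,z_0\}=\{z_0,y_2,x_2\}=0$, and symmetrically for $\{x_0,y_0,z_2\}$. Hence
$$\{x,y,z\}=\{x_2,y_2,z_2\}+\{x_0,y_0,z_0\},$$
with the first summand in $E_2(e)$ and the second in $E_0(e)$. Applying $P_2(e)=L(e,e)$ gives $\{x_2,y_2,z_2\}$. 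The same decomposition also gives $\{P_2(e)x,y,z\}=\{x_2,y_2,z_2\}$. So $L(e,e)$ lies in the centroid $Z(E)$.

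\emph{$(b)\Rightarrow(a)$.} Let $T=L(e,e)\in Z(E)$ and take $x\in E_1(e)$. By the outer symmetry of the triple product, $T\{e,e,x\}=\{Te,e,x\}$. Since $Te=e$, this reads $T(\frac12 x)=\{e,e,x\}=\frac12 x$. On the other hand $T(\frac12 x)=\frac14 x$, so $x=0$.

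\emph{$(a)\Rightarrow(c)$.} This direction should hold with $\gamma=1$. Write $x=x_0+x_2$. Then $L(x,e)(e)=\{x,e,e\}=x_2$ and $\{e,x,x_2\}=\{e,x_2,x_2\}$, so the left-hand side of $(c)$ is $\|\{e,x_2,x_2\}\|$. Similarly $L(e,x)(e)=\{e,x_2,e\}=x_2^{*_e}$, and the $x_0$ part drops out of $\{x,e,\cdot\}$ on $E_2(e)$. So the right-hand side of $(c)$ is $\|\{x_2,e,x_2^{*_e}\}\|$. Now compute inside the unital JB$^*$-algebra $E_2(e)$ with the triple product formula for JB$^*$-algebras. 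One obtains $\{e,x_2,x_2\}=x_2^{*_e}\circ_e x_2$ and $\{x_2,e,x_2^{*_e}\}=x_2\circ_e x_2^{*_e}$. These are equal because the Jordan product is commutative, so the two norms coincide.

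\emph{$(c)\Rightarrow(a)$.} This copies the proof of Theorem~\ref{t characterization of commutative JBW*-triples} with a single tripotent and no completeness assumption. Take $x\in E_1(e)$. Peirce arithmetic gives $\{e,x,e\}=0$, so $L(x,e)L(e,x)(e)=0$. It also gives $L(e,x)L(x,e)(e)=\{e,x,\frac12 x\}=\frac12\{e,x,x\}$. By \cite[Proposition 2.4]{BuFerMarPe2006}, $\|x\|^2\le 4\|\{x,x,e\}\|$. The inequality in $(c)$ then forces $\{x,x,e\}=0$, hence $x=0$.

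Everything here is routine Peirce bookkeeping. The only points needing care are the mixed-product vanishing in $(a)\Rightarrow(b)$ and the JB$^*$-algebra computation in $E_2(e)$ for $(a)\Rightarrow(c)$. Keeping the $\frac{j}{2}$-eigenvalue convention straight is what makes $(b)\Rightarrow(a)$ work.
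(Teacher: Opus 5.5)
Your proof is correct. For $(a)\Rightarrow(b)$ and $(a)\Rightarrow(c)$ (with equality, so $\gamma=1$) you follow the paper.

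In $(a)\Rightarrow(c)$ you tacitly use that the triple product of $E$, restricted to $E_2(e)$, coincides with the JB$^*$-algebra triple product of $(E_2(e),\circ_e,*_e)$. The paper justifies this via \cite[Proposition 5.5]{Kaup1983riemann}.

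For $(c)\Rightarrow(a)$ you quote \cite[Proposition 2.4]{BuFerMarPe2006} where the paper quotes \cite[Theorem 2.3]{Pe2015}. Both serve only to guarantee $\{x,x,e\}\neq 0$ for $0\neq x\in E_1(e)$.

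The genuine difference is in $(b)\Rightarrow(a)$. The paper tests the centroid identity on $\{e,x,x\}$ with $x\in E_1(e)$. This element lies in $E_2(e)$, so $L(e,e)$ fixes it, whereas letting $L(e,e)$ act on the outer $x$ halves it. Hence $\{e,x,x\}=0$, and the non-vanishing result of \cite{Pe2015} is needed again to conclude $x=0$.

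You test the identity on $\{e,e,x\}$ instead. Because $L(e,e)e=e$, this says precisely that $L(e,e)^2=L(e,e)$, which is incompatible with $L(e,e)$ acting as $\frac12$ on $E_1(e)$. Your argument is shorter and uses no norm estimate at all. The paper's route makes both $(b)\Rightarrow(a)$ and $(c)\Rightarrow(a)$ rest on the same non-degeneracy fact.

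Two minor remarks. First, outer symmetry plays no role in your step, since $T\{e,e,x\}=\{Te,e,x\}$ is the defining identity of the centroid. Second, your $\frac{j}{2}$ eigenvalue convention is the correct one; the paper's sentence placing the eigenvalues of $L(e,e)$ in $\{0,1,2\}$ is a slip. This matters because your $(b)\Rightarrow(a)$ relies exactly on the value $\frac12$.
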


\begin{proof} The equivalence $(a)\Leftrightarrow (b)$, which is almost explicit in \cite[\S 4]{DiTi88}, follows from Peirce arithmetic. If $(a)$ holds, given $x,y,z\in E$ we write $x = x_0 + x_2$, $y = y_0 + y_2$, and $z = z_0 + z_2$, with $x_j= P_j(e)(x),$ $y_j= P_j(e)(y),$ and $z_j= P_j(e)(z)$ ($j=0,2$). By Peirce arithmetic, $$L(e,e)\{x,y,z\} =  L(e,e)\{x_2,y_2,z_2\} = \{x_2,y_2,z_2\} =  \{x,y,L(e,e)(z)\},$$ which proves that $L(e,e)$ lies in the centroid of $E$.\smallskip
	
We still assuming $(a)$ and the previous notation. It is easy to check that $L(e,x) L(x,e) (e) = L(e,x) (x_2) = \{e,x_2,x_2\}$. On the other hand, since $e$ and $x_2$ lie in $E_2(e),$ and the latter is a JB$^*$-algebra, by \cite[Proposition 5.5]{Kaup1983riemann}, the original triple product on $E_2(e)$ coincides with the triple product as JB$^*$-algebra. Consequently, 
$$L(x,e) L(e,x) (e) = L(x_2,e) L(e,x_2) (e) = L(x_2,e) (x_2^{*_e}) =  x_2 \circ_e x_2^{*_e} = \{e,x_2,x_2\},$$ and thus  $L(x,e) L(e,x) (e) = L(e,x) L(x,e) (e)$. We have therefore shown that $(a)\Rightarrow (b)$ \& $(c)$.\smallskip

Suppose now that $(b)$ holds. If there exists $0\neq x \in E_1(e)$, by \cite[Theorem 2.3]{Pe2015}, the element $L(e,x)(x) = \{x,x,e\}$ is non-zero. A new application of Peirce arithmetic proves that the identities $\{x,x,e\} =L(e,e) \{x,x,e\} = L(e,e) L(e,x) (x) = L(e,x)(x) \neq 0$, and $ L(e,x) L(e,e) (x) = \frac12 L(e,x) (x)$ hold. Therefore $L(e,e)$ is not an element in the centroid of $E$, which contradicts the assumption. This shows that $(b)\Rightarrow (a)$.\smallskip

$(c)\Rightarrow (a)$ As before, if there exists $0\neq x \in E_1(e)$, the Peirce arithmetic leads to $0=L(x,e) L(e,x) (e)$, while $L(e,x) L(x,e) (e) = L(e,x) (x) \neq 0$ (see \cite[Theorem 2.3]{Pe2015}), which contradicts $(c)$.
\end{proof}

The remaining part of this section is devoted to studying the first algebraic consequences that can be derived for a  JB$^*$-triple $E$ satisfying the Le Page inequality in \eqref{eq Le Page triples}, that is, the existence of $\gamma>0$ satisfying $\|\{a,b,\{x,y,z\}\}\|\leq \gamma \ \! \|\{x,y,\{a,b,z\}\}\|,$ for all $a,b,x,y,z\in E.$ We shall first show how this property implies that every inner ideal generated by a single element in $E$ must be an associative JB$^*$-algebra or equivalently, a commutative C$^*$-algebra.\smallskip 

A closed subspace $I$ of a JB$^*$-triple $E$ is an \textit{inner ideal} of $E$ if $\{I,E,I \} \subseteq I.$ 
 The Peirce subspaces $E_2(e)$ and $E_0(e)$ associated with a tripotent $e\in E$ are inner ideals of $E$.\smallskip

Let us revisit some properties and notation required in our arguments. We have already surveyed the Gelfand theory for commutative JB$^*$-triples in the introduction. The representation is even simpler if the commutative JB$^*$-triple is in fact generated by a single element. More concretely, let $a$ be an element in a JB$^*$-triple $F$, and let the symbol $F_a$ stand for the JB$^*$-subtriple of $F$ generated by $a$ (i.e. the norm closure of the linear span of all odd powers of $a$ defined, recursively, by $a^{[3]}=\{a,a,a\},$ and $a^{[2n+1]} = \{a,a,a^{[2n-1]}\}$ for all $n\geq 2$). It is known that $F_a$ is identified, via an isometric triple isomorphism $\Psi_a$, with a commutative C$^*$-algebra of the form $C_0(\Omega_a),$ for a (unique) locally compact Hausdorff space $\Omega_a\subseteq [0,\|a\|]$ depending on $a$, where $\Omega\cup \{0\}$ is compact and $\Psi_a(a)$ is the natural embedding of $\Omega_a$ into $\mathbb{C}$ (cf. \cite[Corollary 1.15]{Kaup1983riemann}, \cite[\S 3]{Ka96} and \cite[Theorem 4.2.9]{CabRodBookV1}). The set $\Omega_a$ is known as the triple spectrum of $a$. If $f$ is any continuous function in $C_0(\Omega_a),$ we write $f_t (a)$ for the element $\Psi_{a}^{-1} (f)$. We discover in this construction the continuous triple functional calculus at the element $a$. \smallskip

Taking $f_n (t):= t^{\frac{1}{2n-1}}$ ($n\in \mathbb{N}$), the element $a^{[\frac{1}{2n-1}]} = \left(f_n\right)_t (a)$ is called the $(2n-1)$th root of $a$, and satisfies $(a^{[\frac{1}{2 n-1}]})^{[2n-1]} = a$ for every natural $n$. The sequence $(a^{[\frac{1}{2 n-1}]})_n$ converges in the weak$^*$
topology of $E^{**}$ to a (unique) tripotent in $E^{**}$, which is denoted by $r(a)$ and called the \emph{range tripotent} of $a$ in $E^{**}$. The tripotent $r(a)$ coincides with the smallest tripotent $e\in E^{**}$ satisfying that $a$ is a positive element in the JBW$^*$-algebra $(E^{**}_{2} (e), \circ_{e}, *_{e})$ (compare \cite[\S 3.3, Lemma
3.1]{EdwardsRutti1988facial} or \cite[Lemma 3.2. and the previous comments]{EdFerHosPe}).\smallskip

There is another way to locate (probably non-unital) JB$^*$-subalgebras inside JB$^*$-triples by employing the inner ideal generated by a single element. Suppose $a$ is a non-zero element in a JB$^*$-triple $E$. The symbol $E(a)$ will stand for the \emph{inner ideal of $E$ generated by the element $a$}. It is known (see \cite[Proposition 2.1]{Bunce2000structure}) that ${E}(a)$ coincides with the norm-closure of $\{a, {E}, a\} = Q(a) ({E})$ in ${E}$, and satisfies the following properties:
\begin{enumerate}[$(1)$]  
	\item ${E}_{a} \subset {E}(a).$
	\item ${E}(a)$ is a JB$^*$-subalgebra of ${E}^{**}_2(r(a))$.
	\item The weak$^*$-closure of ${E}(a)$ in ${E}^{**}$, $\overline{{E}(a)}^{w^*}$, identifies naturally with ${E}(a)^{**}$ and coincides with ${E}_{2}^{**}(r(a)).$
	\item Let $f:\Omega_a\to \mathbb{C}$ be a continuous function vanishing at zero, where $\Omega_a\subseteq (0,\|a\|]$ denotes the triple spectrum of $a$. Suppose that the function $\iota (t) =t$ ($t\in \Omega_a$) belongs to the subalgebra of $C_0(\Omega_a)$ generated by $f$. Then $E_{f_t(a)} = E_{a}$ and $E(a) = E(f_t(a))$. 
\end{enumerate}\smallskip

We are now ready to deal with general JB$^*$-triples and single-generated inner ideals.

\begin{proposition}\label{p inner ideal single generated is commutative} Let $E$ be a JB$^*$-triple for which there exists $\gamma>0$ satisfying \begin{equation}\label{eq Le Page triples propo}
		\big\|\{a,b,\{x,y,z\}\}\big\|\leq \gamma \ \! \big\|\{x,y,\{a,b,z\}\}\big\|, \hbox{ for all } a,b,x,y,z\in E.
	\end{equation} Then, for each $c\in E,$ the inner ideal $(E(c), \circ_{r(c)}, *_{r(c)})$ is an associative JB$^*$-algebra, equivalently, a commutative C$^*$-algebra.
\end{proposition}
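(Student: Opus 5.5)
The plan is to use the Iochum--Loupias--Rodríguez-Palacios characterization, Theorem~\ref{thm: associative JB*-algebra nilpotent}. It suffices to show that the JB$^*$-algebra $\mathfrak{A}:=(E(c),\circ_{r(c)},*_{r(c)})$ contains no non-zero $2$-nilpotent element. Write $u=r(c)\in E^{**}$. By property $(2)$, $E(c)$ is a JB$^*$-subalgebra of $E^{**}_2(u)$. By \cite[Proposition 5.5]{Kaup1983riemann}, the triple product of $E^{**}$ restricted to $E^{**}_2(u)$ coincides with the JB$^*$-algebra triple product
$$\{x,y,z\}=(x\circ_u y^{*_u})\circ_u z+(z\circ_u y^{*_u})\circ_u x-(x\circ_u z)\circ_u y^{*_u}.$$
Since $E(c)\subseteq E$, the Le Page inequality \eqref{eq Le Page triples propo} holds for all elements of $E(c)$. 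So it is enough to show that a JB$^*$-algebra $\mathfrak{A}$ whose triple product satisfies \eqref{eq Le Page triples propo} has no non-zero $2$-nilpotents. The difficulty is that $u$ need not lie in $\mathfrak{A}$, so we cannot plug $u$ in as in the proof of Theorem~\ref{t characterization of commutative JBW*-triples}. We must work with elements of $E(c)$ only.

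Suppose $0\neq a\in\mathfrak{A}$ with $a^2=a\circ_u a=0$. Then $\{a,a^*,a\}=U_a(a)$, where we write $^*$ for $^{*_u}$. Mimic the argument following Theorem~\ref{thm: associative JB*-algebra nilpotent} by choosing five elements in $\mathfrak{A}$ for which one side of \eqref{eq Le Page triples propo} vanishes and the other does not. A natural choice is to replace the missing unit by the positive element $b:=a\circ a^*$ (or a root of it obtained by triple functional calculus, using property $(4)$) and by $a$ itself. For example, compare
$$\{a,a^*,\{b,b,b\}\}\quad\text{with}\quad\{b,b,\{a,a^*,b\}\},$$
or
$$\{a,b,\{b,a,b\}\}\quad\text{with}\quad\{b,a,\{a,b,b\}\}.$$
In the C$^*$-model, where $a^2=0$, $aa^*$ and $a^*a$ are orthogonal positive elements, $a$ maps from one corner to the other and is a Peirce-$1$ type element relative to $p=r(aa^*)+r(a^*a)$. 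There are two supports: $e_1=r(aa^*)$ and $e_2=r(a^*a)$, with $a$ in the off-diagonal $E_1(e_1)\cap E_1(e_2)$ and $e_1\perp e_2$. Take $h=(aa^*)^{[1/3]}$-type elements in $E(c)$ approximating $e_1$. Then $\{h,a,\{a,h,h\}\}$ and $\{a,h,\{h,a,h\}\}$ mimic the identities $\{x,e,\{e,x,e\}\}=0$ and $\{e,x,\{x,e,e\}\}=\tfrac12\{e,x,x\}$ from the JBW$^*$ proof. Concretely, $\{h,a,h\}=0$ because $a$ sits off the corner of $h$. Meanwhile $\{a,a,h\}$ is a non-zero multiple of a positive power of $aa^*$, and $\{h,a,\{a,h,h\}\}$ is non-zero. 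These computations can be verified inside the commutative C$^*$-subalgebra generated by $aa^*$ together with $a$, or via the Jordan identities $a^2=0$ and $U_a(a^*)\neq 0$.

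The main obstacle is carrying out this computation rigorously in a general (possibly exceptional) JB$^*$-algebra rather than in a C$^*$-algebra. I would first try the cleanest choice, $x=z=a$, $y=a^*$, and $a_1,b_1$ built from $a$ and $a^*$. The aim is to show $\{a^*,a,\{a,a^*,a\}\}$ versus $\{a,a^*,\{a^*,a,a\}\}$. Using $a^2=0$ and Jordan identities (Macdonald's theorem reduces two-generator identities to the special case), the left side equals $U_{a^*}U_a(a^*)$-type terms, which are non-zero by the Gelfand--Naimark axiom, while the right side collapses to zero. Since every identity in two generators $a,a^*$ that holds in all special Jordan algebras holds in all Jordan algebras by Macdonald's theorem, I can verify the vanishing and non-vanishing computations in $M_2(\mathbb{C})$-like models with $a=e_{12}$. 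For the non-vanishing, I would use norms of powers $\|a^{[2n+1]}\|=\|a\|^{2n+1}$ in the subtriple generated by $a$. This yields a contradiction with \eqref{eq Le Page triples propo}, so $\mathfrak{A}$ has no non-zero $2$-nilpotents. Hence $\mathfrak{A}$ is associative, and therefore a commutative C$^*$-algebra.
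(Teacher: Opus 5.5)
\textbf{There is a genuine gap: none of the configurations you write down violates \eqref{eq Le Page triples propo}.} Your reduction to excluding non-zero $2$-nilpotents is fine, and so is your identification of the two triple products on $E(c)$. The decisive step is what fails.

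If $a^2=0$ in a JB$^*$-algebra, then $\{a,a^*,X\}=a^2\circ X=0$ for every $X$. So both sides of your first comparison vanish identically.

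In your ``cleanest choice'', $\{a,a^*,a\}=U_a(a)=a^3=0$ and $\{a^*,a,a\}=(a^*)^2\circ a=0$. Hence $\{a^*,a,\{a,a^*,a\}\}=0=\{a,a^*,\{a^*,a,a\}\}$. Your claim that the left-hand side is a non-zero ``$U_{a^*}U_a(a^*)$-type'' term is false.

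Your second comparison, for $a=e_{12}\in M_2(\mathbb{C})$ (so $b=\frac12\mathbf{1}$), gives $\frac1{16}\mathbf{1}$ on both sides.

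The configuration imitating the JBW$^*$ proof needs some $h\in E(c)$ that plays the role of the left support $r(aa^*)$. But $aa^*$ has no Jordan counterpart, and such an $h$ need not exist. In the JB$^*$-algebra $\mathrm{span}\{e_{12},e_{21},e_{11}+e_{22}\}\subseteq M_2(\mathbb{C})$ with $a=e_{12}$, every $h$ with $\{h,a,h\}=0$ is a multiple of $a^*$. Then $\{a,h,h\}=0$ as well. Appealing to Macdonald's theorem does not address any of this. The real difficulty is producing the right test elements inside the possibly non-unital algebra $E(c)$, not exceptionality.

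\textbf{What is missing is an approximate unit placed in the \emph{middle} variables, where it turns triple products into Jordan products.} The paper shows that $w_n=c^{[\frac{1}{2n+1}]}$ is a bounded approximate unit of $(E(c),\circ_{r(c)},*_{r(c)})$. The ingredients are: $w_n\nearrow r(c)$ weak$^*$ in $E^{**}$; Dini's lemma on the quasi-state space, which gives $U_h(w_n)\to h^2$ in norm; and the inequality $\|x\circ h\|^2\le\|x\|\,\|U_h(x)\|$ for positive $x$. It follows that $\{x,w_n,y\}\to x\circ y$ in norm.

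Now apply the hypothesis to $\{z,w_n,\{x,w_n,y\}\}$ and $\{x,w_n,\{z,w_n,y\}\}$ and let $n\to\infty$. This gives $\|(x\circ y)\circ z\|\le\gamma\|x\circ(y\circ z)\|$, which is condition $(c)$ of Theorem~\ref{thm: associative JB*-algebra nilpotent}. For a $2$-nilpotent $a$, taking $x=a^*$, $y=z=a$ yields $\frac12\|a\|^3=\|(a^*\circ a)\circ a\|\le\gamma\|a^*\circ a^2\|=0$.

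Your parenthetical idea of using roots of $a\circ a^*$ could be developed into a local version of this argument. For that, the roots would have to sit in the middle slots of both triple products. You would also have to prove that they act as an approximate unit on $a$, $a^*$ and $a^*\circ a$. The proposal does neither.
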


\begin{proof} Fix $c\in E$, which can be clearly assumed to be non-zero. To simplify the notation we simply write $E(c)$ for $(E(c), \circ_{r(c)}, *_{r(c)})$. Define, via continuous triple functional calculus, a sequence given by $(c^{[\frac{1}{2 n +1}]})_n$. It is known that $(c^{[\frac{1}{2 n +1}]})_n$ converges in the weak$^*$-topology of $E^{**}$ to the range tripotent of $c$ in $E^{**}$ (see \cite[Lemma 3.1 and comments prior to it]{EdwardsRutti1988facial}).\smallskip
	
Clearly, $\left(c^{[\frac{1}{2 n +1}]}\right)_n\subseteq E_{c}\subseteq E(c),$ with $0\leq c^{[\frac{1}{2 n +1}]} \leq c^{[\frac{1}{2 m +1}]}\leq r(c)$ in the JBW$^*$-algebra $E(c)^{**} = E_2^{**} (r(c))$, for all $n\leq m$ in $\mathbb{N}$. \smallskip

We claim that \begin{equation}\label{ eq approximate unit for the inner ideal}\begin{aligned} \left(c^{[\frac{1}{2 n +1}]}\right)_n &
\hbox{ is a bounded approximate unit in the }\\
&\hbox{JB$^*$-algebra $(E(c), \circ_{r(c)}, *_{r(c)} )$,}
	\end{aligned}
\end{equation} that is, for each $d\in E(c)$, the sequence $\left(c^{[\frac{1}{2 n +1}]}\circ_{r(c)} d\right)_n\to d$ in norm. Namely, given a self-adjoint element $h\in E(c)_{sa}$, since the mapping $U_h: E(c)\to E(c)$ is a bounded linear and positive operator (cf. \cite[Proposition 3.3.6]{hanche-olsen84jordan}), we deduce that 
\[0\le U_h\left( c^{[\frac{1}{2 n +1}]} \right)\le U_h \left(c^{[\frac{1}{2 m +1}]}\right)\le U_h(r(c)) = h\circ_{r(c)} h =h^2 \in E(c),\] for all $m\ge n$. Note that the $n$th power of each element $z$ in the JB$^*$-algebra $E(c)$ will be denoted by $z^n$.\smallskip 

Consider the compact space $Q(E(c))$ of all quasi-states  on $E(c)$ (i.e. positive functionals on the JB$^*$-algebra $E(c)$ with norm $\leq 1$) equipped with the weak$^*$-topology of $E(c)^*$, that is, $Q(E(c)):= \{\varphi\in E(c)^*\ : \ \|\varphi\|\le 1\,\text{and}\,\varphi\ge 0\}$. Using similar ideas to those in Kadison's representation theorem, for each $z\in E(c)$, we define $\hat{z}: Q(E(c))\to\mathbb{C}$ by   $\hat{z}(\varphi)=\varphi(z)$. Note that $\hat{z}\in V$. Since $(c^{[\frac{1}{2 n +1}]})_n \nearrow r(c)$ in the weak$^*$-topology of $E^{**}$, it follows that $\left( \widehat{ U_h\left( c^{[\frac{1}{2 n +1}] } \right)}  \right)_n\nearrow \widehat{h^2}$ pointwise in $C(Q(E(c)))$. Dini's lemma assures that $\left( \widehat{ U_h\left( c^{[\frac{1}{2 n +1}] } \right)}  \right)_n \to \widehat{h^2}$ with respect to the supreme norm of $C(Q(E(c)))$. So, the sequence $\left( U_h\left( c^{[\frac{1}{2 n +1}] } \right)\right)_n$ converges in norm to $U_h (r(c))=h^2$.\smallskip

By applying \cite[Lemma 3.5.2$(ii)$]{hanche-olsen84jordan} with $a = r(c)- c^{[\frac{1}{2 n +1}] }$ and $b = h$ we have 
$$\begin{aligned}
\left\|\left(r(c)-c^{[\frac{1}{2 n +1}] }\right)\circ_{r(c)} h\right\|^2&\leq \left\|r(c)-c^{[\frac{1}{2 n +1}] } \right\| \ \left\|U_{h}\left(r(c)-c^{[\frac{1}{2 n +1}] }\right)\right\|\\ 
&\leq 2 \left\|U_{h}\left(r(c)-c^{[\frac{1}{2 n +1}] }\right)\right\| = 2 \left\| h^2 -  U_{h} \left(c^{[\frac{1}{2 n +1}] } \right)\right\| \to 0.
\end{aligned}$$

The above arguments show that for each element $h\in E(c)_{sa},$ the sequence $\left(c^{[\frac{1}{2 n +1}]}\circ_{r(c)} h\right)_n\to h$ in norm, and thus $\left(c^{[\frac{1}{2 n +1}]}\circ_{r(c)} d\right)_n\to d$ in norm for all $d\in E(c)$.\smallskip

We observe that the JB$^*$-algebra $(E(c), \circ_{r(c)}, *_{r(c)})$ admits two different triple products as JB$^*$-triple, namely, the original triple product of $E$ restricted to $E(c)$ and the triple product as JB$^*$-algebra given by $$\{x,y,z\} = (x\circ_{r(c)} y^{*_{r(c)}}) \circ_{r(c)} z + (z\circ_{r(c)} y^{*_{r(c)}})\circ_{r(c)} x -  (x\circ_{r(c)} z)\circ_{r(c)} y^{*_{r(c)}},$$ with $x,y,z\in E(c)$. Since linear bijections between JB$^*$-triples are isometries if, and only if, they are triple isomorphisms \cite[Proposition 5.5]{Kaup1983riemann}, both triple products coincide on $E(c)$. Therefore, for every $n\in \mathbb{N}$ and $x,y,z\in E(c)$ we have 
$$\begin{aligned}
\left\{x, c^{[\frac{1}{2 n +1}]} ,y\right\} &= \left(x\circ_{r(c)} \left(c^{[\frac{1}{2 n +1}]}\right)^{*_{r(c)}}\right) \circ_{r(c)} y \\
&+ \left(y\circ_{r(c)} \left(c^{[\frac{1}{2 n +1}]}\right)^{*_{r(c)}}\right) \circ_{r(c)} x -  (x\circ_{r(c)} y)\circ_{r(c)} \left( c^{[\frac{1}{2 n +1}]} \right)^{*_{r(c)}}.
\end{aligned}$$ It follows from the previous identity and \eqref{ eq approximate unit for the inner ideal} that the sequence $\left(\left\{x, c^{[\frac{1}{2 n +1}]} ,y\right\}\right)_n$ converges in norm to $x\circ_{r(c)} y$. Having in mind that the Jordan product jointly norm continuous, we can easily deduce that    
$$\left(\left\{\left\{x, c^{[\frac{1}{2 n +1}]} ,y\right\}, c^{[\frac{1}{2 n +1}]} ,z\right\}\right)_n \to (x\circ_{r(c)} y)\circ_{r(c)} z \hbox{ in norm.}$$
We can similarly prove that     
$$\left(\left\{\left\{z, c^{[\frac{1}{2 n +1}]} ,y\right\}, c^{[\frac{1}{2 n +1}]} ,x\right\}\right)_n \to (z\circ_{r(c)} y)\circ_{r(c)} x \hbox{ in norm.}$$ Since, by hypotheses \eqref{eq Le Page triples propo} $$\left\| \left\{\left\{x, c^{[\frac{1}{2 n +1}]} ,y\right\}, c^{[\frac{1}{2 n +1}]} ,z\right\} \right\| \leq \gamma \ \! \left\| \left\{\left\{z, c^{[\frac{1}{2 n +1}]} ,y\right\}, c^{[\frac{1}{2 n +1}]} ,x\right\} \right\|, $$ for all natural $n$, taking (norm-)limits we derive that $$ \left\|(x\circ_{r(c)} y)\circ_{r(c)} z \right\| \leq \gamma \ \! \left\| (z\circ_{r(c)} y)\circ_{r(c)} x \right\|,$$ for all $x,y,z\in E(c)$. Theorem~\ref{thm: associative JB*-algebra nilpotent} proves that $(E(c), \circ_{r(c)}, *_{r(c)})$ is an associative JB$^*$-algebra, as desired. 
\end{proof}

We know from \cite[Theorem 2$(i)\Leftrightarrow (xii)$]{CaPe24} that a JB$^*$-triple $E$ is commutative, if and only if, for each $c\in E$ the inner ideal $E(c)$ is an associative JB$^*$-algebra and $L(c,c) (E) \subseteq E(c)$. It is further known from \cite[Remark 3]{CaPe24} that the previous two conditions are required to conclude that $E$ is commutative. Actually, a complex Hilbert space $H$ regarded as a type 1 Cartan factor is a non-commutative JB$^*$-triple but the inner ideal generated by an arbitrary non-zero element in $H$ is isometrically isomorphic to $\mathbb{C}$. So, the conclusion in the previous Proposition~\ref{p inner ideal single generated is commutative} is not enough to deduce the commutativity of the JB$^*$-triple $E$. 

\section{Commutative JB$^*$-triples}\label{sec: commutative JB*-triples}

This section is devoted to prove Theorem~\ref{t characterization of commutative JB*-triples}. We begin by stating some necessary tools. Recall that a non-zero tripotent $e$ in a JB$^*$-triple $E$ is called minimal if $E_2(e) = \mathbb{C} e$. One of the consequence of the Sait\^o-Tomita-Lusin theorem for JB$^*$-triple established in \cite{BuFerMarPe2006} proves that finite sums of mutually orthogonal minimal tripotents in the bidual, $E^{**},$ of a JB$^*$-triple $E$ lie in $E$ up to an orthogonal perturbation. 

\begin{theorem}\label{t 3.3 Lusin}{\rm \cite[Theorem 3.3]{BuFerMarPe2006}} Let $E$ be a JB$^*$-triple, and let $u_1,\ldots , u_n$ be orthogonal minimal tripotents in $E^{**}$ with sum $u$. Then there exist norm-one pairwise orthogonal
	elements, $a_1,\ldots , a_n$ in $E$ such that $a_j = u_j +P_0 (u_j)  (a_j),$ for all $j=1,\ldots , n.$
\end{theorem}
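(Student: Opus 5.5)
The statement is Theorem~3.3 of \cite{BuFerMarPe2006}. To prove it I would first treat a single minimal tripotent and then assemble the orthogonal family inductively.

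\emph{Single tripotent.} Let $u\in E^{**}$ be minimal and let $\varphi_u\in E^*$ be the unique norm-one functional with $\varphi_u(u)=1$. Minimal tripotents of $E^{**}$ are compact, and $\varphi_u$ is an extreme point of the closed unit ball of $E^*$. By the Sait\^o--Tomita--Lusin theorem for JB$^*$-triples (which I take as the main tool of \cite{BuFerMarPe2006}), I would produce $b\in E$ with $\|b\|\le 1$ and $P_2(u)(b)=u$. Since $E_2^{**}(u)=\mathbb{C}u$, this amounts to $\varphi_u(b)=1=\|b\|$.

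The standard Friedman--Russo estimate states that an element $x$ with $\|x\|=1=\|P_2(u)x\|$ and $P_2(u)x=u$ has $P_1(u)x=0$. Applied to $b$, it gives $b=u+P_0(u)(b)$, so $a:=b$ has the required form. Adjusting $b$ by triple functional calculus (replacing $b$ by $f_t(b)$ with $f(1)=1$ and $0\le f\le 1$) keeps the norm equal to one and preserves $P_2(u)b=u$, since $u$ is then a tripotent in the bidual of $E_b$ that is orthogonal-complemented in it.

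\emph{Orthogonal family.} I would argue by induction on $n$. Suppose $a_1,\dots,a_{n-1}$ have been found and are pairwise orthogonal. Let $I$ be the annihilator of $\{a_1,\dots,a_{n-1}\}$ in $E$; this is an inner ideal and a JB$^*$-subtriple. Its bidual is the weak$^*$-closed inner ideal $\{a_1,\dots,a_{n-1}\}^{\perp}$ in $E^{**}$, which is $E^{**}_0\bigl(r(a_1)+\dots+r(a_{n-1})\bigr)$.

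The key point is to choose the earlier $a_j$ so that $u_n$ lies in this bidual, i.e.\ $u_n\perp r(a_j)$. With $a_j=u_j+P_0(u_j)a_j$, the range tripotent $r(a_j)$ dominates $u_j$ and may overlap $u_n$. To prevent this, I would run the single-tripotent step simultaneously for the tripotent $u=u_1+\dots+u_n$. This gives $b\in E$ with $\|b\|=1$ and $b=u+P_0(u)(b)$, using Lusin for the finite-rank tripotent $u$ together with the same Friedman--Russo argument.

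I would then work inside $E(b)$, which by the results recalled before Proposition~\ref{p inner ideal single generated is commutative} is a JB$^*$-subalgebra of $E^{**}_2(r(b))$ containing the $u_j$ in its weak$^*$-closure. The aim is to separate the $u_j$ there, by applying the one-tripotent construction to each $u_j$ inside the JB$^*$-algebra $E(b)$ relative to the pure state $\varphi_{u_j}$. A local Urysohn-type choice, realised by Kadison transitivity for pure states of JB$^*$-algebras, should yield elements $c_j\in E(b)$ with $\varphi_{u_k}(c_j)=\delta_{jk}$. Compressing these by a common functional-calculus cut-off then makes them orthogonal.

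\emph{Main obstacle.} The hardest step is securing pairwise orthogonality of the $a_j$ in $E$ itself, rather than only in $E^{**}$, while keeping each $P_2(u_j)a_j=u_j$. My first attempt would be the transitivity/compression argument inside $E(b)$ just described. If that fails, I would fall back on the inductive annihilator argument, shrinking each $a_j$ via functional calculus so that $r(a_j)$ stays orthogonal to the remaining $u_k$. For this I would use that the $u_k$ are compact and orthogonal to $u_j$, which allows separation by a continuous function on the triple spectrum.
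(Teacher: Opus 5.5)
The paper gives no proof of this statement. It is quoted from \cite{BuFerMarPe2006}, and only the case $n=1$ is used. Your argument therefore has to stand on its own, and it has two genuine gaps.

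\textbf{The exactness step.} A Sait\^o--Tomita--Lusin theorem is an approximation statement: elements of $E^{**}$ are approximated by elements of $E$ of no larger norm, relative to given functionals. Applied to $u$ and $\varphi_u$, it yields $y\in E$ with $\|y\|\le 1$ and $\varphi_u(y)$ close to $1$. It does not yield $P_2(u)(y)=u$.

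Exactness is the whole content here. By the Friedman--Russo lemma (which you invoke correctly, and which holds for every tripotent), the case $n=1$ is equivalent to $\varphi_u$ attaining its norm on $E$. The compactness of $u$ that you mention would settle $n=1$ directly: if $u\le v$, where $v$ is the weak$^*$-limit of $(a^{[2k-1]})_k$ for some norm-one $a\in E$, then $a=v+P_0(v)(a)\in u+E^{**}_0(u)$. You never use it.

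Your `simultaneous' step for $u=u_1+\dots+u_n$ has the same problem, and there it is not a side issue: it is essentially the theorem itself. Indeed, if $a_1,\dots,a_n$ are as in the statement, then $a=\sum_j a_j$ has norm one and $a=u+P_0(u)(a)$, because $u_k\le r(a_k)\perp r(a_j)$ for $k\neq j$. So the proposal postpones, rather than supplies, the key input: some form of compactness of finite-rank tripotents of $E^{**}$.

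\textbf{The separation step.} Neither of your mechanisms produces orthogonal elements. Knowing $\varphi_{u_k}(c_j)=\delta_{jk}$ controls only $P_2(u_k)(c_j)$, not $P_1(u_k)(c_j)$ nor $\|c_j\|$. Moreover, triple functional calculus acts on one element at a time: $f_t(c_j)\in E_{c_j}$, and no `common cut-off' makes elements of different subtriples $E_{c_j}$ orthogonal.

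The fallback fails on two counts.
\begin{enumerate}[$(i)$]
\item The bidual of the annihilator of $\{a_1,\dots,a_{n-1}\}$ can be strictly smaller than $E^{**}_0(r(a_1)+\dots+r(a_{n-1}))$. In $E=C[0,1]$, take $a_1(t)=t$, with $u_1=\chi_{\{1\}}$ and $u_2=\chi_{\{0\}}$ as minimal projections in $E^{**}$. Then $a_1=u_1+P_0(u_1)(a_1)$ and $u_2\perp r(a_1)=\chi_{(0,1]}$, yet the annihilator of $a_1$ in $E$ is $\{0\}$.
\item Shrinking an already chosen $a_j$ by its own functional calculus cannot remove its overlap with $u_k$, because $u_k$ need not lie in the bidual of $E_{a_j}$. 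In $E=K(\ell_2)$, let $p$ be the projection onto $\mathbb{C}(e_2+e_3)$ and $a_1=e_{11}+p$. This is a tripotent with $a_1=u_1+P_0(u_1)(a_1)$ for $u_1=e_{11}$. Every $f_t(a_1)$ with $f(1)=1$ equals $a_1$, and $\{a_1,a_1,e_{22}\}\neq 0$.
\end{enumerate}

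\textbf{A device that works.} Build one element $c\in E$ with $c=\sum_k\alpha_k u_k+c_0$, where $c_0=P_0(u)(c)$ and the $\alpha_k\in(0,1]$ are distinct. Take $f_j$ continuous on $[0,1]$, vanishing at $0$, with $0\le f_j\le 1$, pairwise disjoint supports, and $f_j(\alpha_j)=1$. Since $c_0\perp u_k$ for all $k$, we have $f_t(c)=\sum_k f(\alpha_k)u_k+f_t(c_0)$. Hence $a_j:=(f_j)_t(c)=u_j+(f_j)_t(c_0)$, with $(f_j)_t(c_0)\in E^{**}_0(u)\subseteq E^{**}_0(u_j)$. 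These $a_j$ are pairwise orthogonal and of norm one.

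Such a $c$ comes from your simultaneous step applied to the orthogonal minimal families $\varepsilon_1u_1,\dots,\varepsilon_nu_n$, for $\varepsilon\in\{\pm1\}^n$. Set $w_\varepsilon=\sum_j\varepsilon_j u_j$. Since $L(w_\varepsilon,w_\varepsilon)=L(u,u)$, the Peirce projections of $w_\varepsilon$ are those of $u$, so the step gives $b_\varepsilon=w_\varepsilon+P_0(u)(b_\varepsilon)$. A convex combination of the $b_\varepsilon$ then realises any $(\alpha_j)\in[0,1]^n\subseteq\mathrm{conv}\{\pm1\}^n$.

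So once the exact simultaneous statement is proved, orthogonality is elementary. Without it, the argument is incomplete.
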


Suppose $E$ is a weak$^*$-dense JB$^*$-subtriple of a JBW$^*$-triple $W$. The natural embedding of $E$ into $W$ (denoted by $\iota_{E}: E\hookrightarrow  W$) is an isometric triple embedding with weak$^*$-dense image. By \cite[Theorem 4 and Proposition 6]{BarDaHor1988}, there exist orthogonal weak$^*$-closed triple ideals $M,N$ of $E^{**},$ and a weak$^*$-continuous triple isomorphism $\Psi : W\to M$ such that $E^{**} = M \oplus N$, and the mapping $\Psi\circ \iota_{E}:E\to M$ coincides with the composition of the canonical embedding of $E$ inside $E^{**}$ (denoted by $\kappa_{E} : E\hookrightarrow E^{**}$) and the canonical projection $\pi_{M}$ of $E^{**}$ onto $M$. Take now a minimal tripotent $e$ in $W$. By applying Theorem~\ref{t 3.3 Lusin} to the minimal tripotent $\Psi (e)\in M,$ which is also minimal in $E^{**}$ with $\pi_{M} \Psi (e) = \Psi (e)$, we obtain a norm-one element $a\in E$ such that $\kappa_{E}(a) = \Psi(e) +P_0(\Psi(e)) (\kappa_{E}(a))$ in $E^{**}$. Therefore, $$\begin{aligned}
\Psi (\iota_{E} (a)) &= \pi_{M} (\kappa_{E}(a)) = \pi_{M} (\Psi(e)) +\pi_{M}(P_0(\Psi(e)) (\kappa_{E}(a))) \\
&= \Psi (e) +P_0(\Psi(e)) (\pi_{M}(\kappa_{E}(a))) = \Psi (e) +P_0(\Psi(e)) (\Psi(\iota_{E}(a)))\\
&= \Psi (e + P_0(e) (\iota_{E}(a))).
\end{aligned}$$ If we identify elements in $E$ with their images inside $W$, we get the following result.

\begin{theorem}\label{t 3.3 Lusin single element}{\rm \cite[Theorem 3.3]{BuFerMarPe2006}} Let $E$ be a weak$^*$-dense JB$^*$-subtriple of a JBW$^*$-triple $W,$ and let $e$ be a minimal tripotent in $W$. Then there exists a norm-one element $a$ in $E$ such that $a = e + P_0 (e) (a)$.
\end{theorem}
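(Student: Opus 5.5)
The statement just given is Theorem~\ref{t 3.3 Lusin single element}, and the derivation preceding it already contains essentially all the ingredients. I would turn that computation into the proof.

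The first step is to realise $W$ as a summand of $E^{**}$. The inclusion $\iota_E\colon E\hookrightarrow W$ is an isometric triple embedding with weak$^*$-dense range. By \cite[Theorem 4 and Proposition 6]{BarDaHor1988} there is a decomposition $E^{**}=M\oplus N$ into orthogonal weak$^*$-closed triple ideals, together with a weak$^*$-continuous triple isomorphism $\Psi\colon W\to M$, such that
$$\Psi\circ\iota_E=\pi_M\circ\kappa_E.$$

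The second step is to transport minimality. Since $\Psi$ is a triple isomorphism, $\Psi(e)$ is a minimal tripotent of $M$. Because $M$ is a triple ideal orthogonal to $N$, we have $E^{**}_2(\Psi(e))=M_2(\Psi(e))=\mathbb{C}\Psi(e)$, since $\{\Psi(e),N,\Psi(e)\}=0$. Hence $\Psi(e)$ is also minimal in $E^{**}$.

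The third step applies Theorem~\ref{t 3.3 Lusin} with $n=1$ and $u_1=\Psi(e)$. This produces a norm-one element $a\in E$ with
$$\kappa_E(a)=\Psi(e)+P_0(\Psi(e))(\kappa_E(a)).$$

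The fourth step projects onto $M$. This needs $\pi_M$ to commute with $P_0(\Psi(e))$. I would justify this by noting that $P_0(\Psi(e))$ is built from $L(\Psi(e),\Psi(e))$, which vanishes on $N$ and leaves $M$ invariant. Therefore $P_0(\Psi(e))$ acts as the identity on $N$ and preserves $M$, and so it commutes with $\pi_M$. Applying $\pi_M$ then gives
$$\Psi(\iota_E(a))=\Psi(e)+P_0(\Psi(e))\bigl(\Psi(\iota_E(a))\bigr).$$
Since $\Psi$ is a triple isomorphism, it intertwines Peirce projections, so the right-hand side equals $\Psi\bigl(e+P_0(e)(\iota_E(a))\bigr)$. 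Injectivity of $\Psi$ now yields $\iota_E(a)=e+P_0(e)(\iota_E(a))$. Identifying $E$ with its image in $W$, this is the claim, and $\|a\|=1$ holds because $\iota_E$ is isometric.

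I expect the main obstacle to be the bookkeeping in the second and fourth steps: checking minimality in $E^{**}$, and checking that $\pi_M$ commutes with the Peirce projection. Both should follow from Peirce arithmetic together with the orthogonality $\{M,N,E^{**}\}=0$.
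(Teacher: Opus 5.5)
Your proposal is correct and follows the paper's derivation step by step. That derivation uses the decomposition $E^{**}=M\oplus N$ from \cite{BarDaHor1988} with $\Psi\circ\iota_E=\pi_M\circ\kappa_E$, applies Theorem~\ref{t 3.3 Lusin} to $\Psi(e)$, and then projects onto $M$ and pulls back through $\Psi$. Your two extra checks supply exactly the points the paper leaves implicit: minimality of $\Psi(e)$ in $E^{**}$ because $\{M,N,E^{**}\}=0$, and $\pi_M P_0(\Psi(e))=P_0(\Psi(e))\pi_M$ because $P_0(\Psi(e))$ is a polynomial in $L(\Psi(e),\Psi(e))$ with constant term the identity.
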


We are now in a position to prove the main result of this paper.

\begin{proof}[Proof of Theorem~\ref{t characterization of commutative JB*-triples}] Let $E$ be a JB$^*$-triple for which there exists a positive constant $\gamma$ satisfying the Le Page-type inequality in \eqref{eq Le Page triples}.  We have already seen in the previous section that, under these hypotheses, for each $c\in E$,  the inner ideal $E(c)$ generated by the element $c$ is an associative JB$^*$-algebra, equivalently, a commutative C$^*$-algebra (Proposition~\ref{p inner ideal single generated is commutative}). Arguing by contradiction we assume that $E$ is not commutative.\smallskip
	
It follows from \cite[Corollary 1$(a)\Leftrightarrow(b)$ or Theorem 2]{CaPe24} and what we have just commented in the previous paragraph that, under our assumptions, if $E$ is not commutative, all Cartan factors in $E^{**}$ are rank-one Hilbert spaces and at least one of them has dimension $\geq 2$. It follows that the atomic part of $E^{**}$ is of the form $\mathcal{A} = \bigoplus_{i\in \Gamma}^{\infty} H_i,$ where each $H_i$ is a complex Hilbert space (cf. \cite[Proposition 2]{FriRuss86} and \cite[Corollary 1.8]{horn87classificationtypeI}).\smallskip

A cornerstone in the theory of JB$^*$-triples assures that for each JB$^*$-triple $E,$ there exists an isometric triple embedding with weak$^*$-dense image $\Psi_{E}: E\to \mathcal{A},$ where $\mathcal{A}$ denotes the atomic part of $E^{**}$, which is known to be representable as a direct sum of Cartan factors (see \cite[Propositions 1 and 2 and Theorem 1]{FriRuss86} and \cite[Theorem 1]{FriRuss85Crelles}).\smallskip

Combining the conclusions in the above paragraphs, we deduce that if $E$ is a non-commutative JB$^*$-triple satisfying \eqref{eq Le Page triples}, there exists a family of complex Hilbert spaces $\{ H_i,  : i\in \Gamma \}$ with $\Gamma_2 := \{i\in \Gamma: \hbox{dim} (H_i)\geq 2\}\neq \emptyset,$ and an isometric triple embedding with weak$^*$-dense image $\Psi_{E} : E\hookrightarrow  \bigoplus_{i\in \Gamma}^{\infty} H_i$. The inner product of each Hilbert space $H_i$ will be denoted by $\langle.|.\rangle_i$ or simply by $\langle.|.\rangle$ if the meaning is clear. To simplify the notation we write $\mathcal{A} = \bigoplus_{i\in \Gamma}^{\infty} H_i$, and note that $\mathcal{A}$ is a Hilbert $\ell_{\infty}(\Gamma)$-bimodule with respect to the pointwise operation and inner product given by $\langle (a_i)_{i\in \Gamma} | (b_i)_{i\in \Gamma}\rangle := (\langle a_i| b_i\rangle_i )_{i\in \Gamma}\in \ell_{\infty}(\Gamma)$. The triple product on $\mathcal{A}$ is given by \begin{equation}\label{eq triple product of A} \{(x_i)_{i\in \Gamma}, (y_i)_{i\in \Gamma}, (z_i)_{i\in \Gamma} \}_{_\mathcal{A}}\! =\! \Big(\{x_i, y_i, z_i \}\Big)_{i\in \Gamma}\!\!\!  =\! \left(\frac12 \langle x_i| y_i \rangle_{i} z_i + \frac12 \langle z_i| y_i \rangle_{i} x_i   \right)_{i\in \Gamma}\!\!\!.
\end{equation} We can clearly identify $E$ with its image under $\Psi_{E}$.\smallskip

Let us make an observation. Given a norm-one element $a= (a_i)_{i\in \Gamma}\in\mathcal{A}$, its range tripotent in $\mathcal{A}$ is precisely $r= (r_i)_{i\in \Gamma}$, where $r_i =0 $ if $a_i =0$ and $r_i = \frac{a_i}{\|a_i\|}$ otherwise.\smallskip

Pick $i_0\in \Gamma_2$ and two norm-one elements $h,k\in H_{i_0}$ with $\langle h | k \rangle_{i_0}=0$. We decompose $\mathcal{A}$ in the form $\mathcal{A} = H_{i_0} \bigoplus^{\infty} \left( \bigoplus_{i_0\neq i\in \Gamma}^{\infty} H_i \right)$. The elements $\mathbf{h} = (h,0)$ and $\mathbf{k} = (k,0)$ are minimal tripotents in $\mathcal{A}$. By Theorem~\ref{t 3.3 Lusin single element} there exist norm-one elements $a,b\in E$ satisfying $a= \mathbf{h}+ P_0 (\mathbf{h}) (a)$ and $b= \mathbf{k}+ P_0 (\mathbf{k}) (b)$. Let $r =(h, (r_i)_{i_0\neq i\in \Gamma}) = \mathbf{h} + r_0 = \mathbf{h} + P_0 (\mathbf{h}) (r)   \in\mathcal{A}$ denote the range tripotent of $a$ in $\mathcal{A}$.\smallskip

We claim that $b\notin E(a)$. Otherwise, since $(E(a), \circ_r, *_{r})$ is a JB$^*$-algebra we would have $$\begin{aligned}
	\mathbf{k} + P_0(\mathbf{k}) (b)  &= \left(k, (b_i)_{i\neq i_0} \right) =  b = (b^{*_r})^{*_r} = \{r,\{r,b,r\},r\}  \\
	&= \Big( \{h,\{h,k,h\},h\}, (\{r_i,\{r_i,b_i,r_i\},r_i\})_{i\neq i_0} \Big)\\
	&= \Big( 0, (\{r_i,\{r_i, b_i, r_i\},r_i\})_{i\neq i_0} \Big), 
\end{aligned} $$ which is impossible.\smallskip

Making use of the continuous triple functional calculus, we can easily check that for each $0<\beta <1$ and $a_0 = P_0(\mathbf{h})(a)$, we have \begin{equation}\label{eq a to the beta} a^{[\beta]} = (\|a_i \|^{\beta} \ r_i)_{ i} = (h, (\|a_i \|^{\beta} \ r_i)_{i\neq i_0})= \mathbf{h} + a_0^{[\beta]}.
\end{equation} Since $b\notin E(a),$ $a^{[\beta]}\in E_{a}\subseteq E(a)= E(a^{[\beta]})$, and thus $\{a^{[\beta]}, b, a^{[\beta]}\}\in E(a)$, we deduce that $E\ni c_{\beta} = b - \{a^{[\beta]}, b, a^{[\beta]}\}^{*_{r}}\notin E(a)$. Obviously, $\|c_{\beta}\|\leq 2$.\smallskip

We shall next prove that for $\beta$ close to $0$, the element $c_{\beta}$ is almost orthogonal to $a$ in the Hilbert $\ell_{\infty}(\Gamma)$-bimodule $\mathcal{A}$. More concretely \begin{equation}\label{eq almost orthogonality of a and cbeta}  \left\| \langle a | c_{\beta} \rangle \right\| \leq \frac{2 \beta}{(1 + 2 \beta)^{\frac{1}{2 \beta} + 1}}, \hbox{ for all } 0<\beta<1. 
\end{equation}  Namely, having in mind \eqref{eq a to the beta} and the triple product of $\mathcal{A}$ in \eqref{eq triple product of A} we get $$E(a) \ni \{a^{[\beta]}, b, a^{[\beta]}\} = \left( \|a_i\|^{2 \beta} \langle r_i | b_i  \rangle r_i  \right)_{i},$$ and $$E(a) \ni \{a^{[\beta]}, b, a^{[\beta]}\}^{*_{r}}  = \{r, \{a^{[\beta]}, b, a^{[\beta]}\} ,r\}= \left( \|a_i\|^{2 \beta} \langle b_i | r_i  \rangle r_i  \right)_{i},$$ where if $r_i\neq 0$ we have $\|a_i\|^{2 \beta-2} \langle b_i | a_i  \rangle a_i.$ Therefore, $$ \langle c_{\beta} | a\rangle  = \Big( \langle b_i | a_i \rangle - \|a_i\|^{2 \beta} \langle b_i | r_i  \rangle \langle r_i | a_i  \rangle \Big)_{i}.$$ If $a_i \neq 0$, the $i$th component of $\langle c_{\beta} | a\rangle$ is $$\begin{aligned}
\langle b_i | a_i \rangle - \|a_i\|^{2 \beta} \langle b_i | r_i  \rangle \langle r_i | a_i  \rangle &= \langle b_i | a_i \rangle - \|a_i\|^{2 \beta} \frac{1}{\|a_i\|} \langle b_i | {a_i}  \rangle \frac{1}{\|a_i\|} \langle {a_i} | a_i  \rangle \\
&= \langle b_i | a_i \rangle - \|a_i\|^{2 \beta} \langle b_i | {a_i}  \rangle,
\end{aligned}$$ and hence $$ \|  \langle c_{\beta} | a\rangle\| \leq \sup_{i} \|b_i\| \|a_i\| \left( 1- \|a_i\|^{2 \beta} \right) \leq \sup_{i} \|a_i\| \left( 1- \|a_i\|^{2 \beta} \right).$$ The desired inequality in \eqref{eq almost orthogonality of a and cbeta} follows from the fact that the maximum value of the function $f(t) = t \left(1 - t^{2\beta} \right)$ on the interval $[0,1]$ is $\frac{2 \beta}{(1 + 2 \beta)^{\frac{1}{2 \beta} + 1}}$. Note that $\displaystyle \lim_{\beta\to 0^+} \frac{2 \beta}{(1 + 2 \beta)^{\frac{1}{2 \beta} + 1}} =0$.\smallskip

In the decomposition $\mathcal{A} = H_{i_0} \bigoplus^{\infty} \left( \bigoplus_{i_0\neq i\in \Gamma}^{\infty} H_i \right)$, we can write $$\begin{aligned}
c_{\beta} = b- \{a^{[\beta]}, b, a^{[\beta]}\} &= \left(
k -\{h,k,h\}, ((c_{\beta})_i)_{i\neq i_0} \right) \\
&= \left(
k , ((c_\beta)_i)_{i\neq i_0} \right)=\mathbf{k} + P_0(\mathbf{k}) (c_{\beta}).
\end{aligned}$$ 

Observe that $$\begin{aligned}
\langle c_{\beta}| a \rangle &= \langle \mathbf{k} + P_0(\mathbf{h}) (c_{\beta}) | \mathbf{h} + P_0(\mathbf{h}) (a) \rangle \\
&= \langle \mathbf{k}  | \mathbf{h}  \rangle + \langle  P_0(\mathbf{h}) (c_{\beta}) | P_0(\mathbf{h}) (a) \rangle= \langle  P_0(\mathbf{h}) (c_{\beta}) | P_0(\mathbf{h}) (a) \rangle = \langle  P_0(\mathbf{h}) (c_{\beta}) | a_0 \rangle.
\end{aligned}.$$

We finally consider the triple products $\{c_{\beta},a,\{a,c_{\beta},a\}\}$ and $\{a, c_{\beta},\{c_{\beta},a,a\}\}$ and compute their norms. In the first case we have
$$\begin{aligned}
\{c_{\beta}&,a,\{a,c_{\beta},a\}\} = \{\mathbf{k} + P_0(\mathbf{k}) (c_{\beta}),\mathbf{h}+a_0, \{\mathbf{h}+a_0,\mathbf{k} + P_0(\mathbf{k}) (c_{\beta}),\mathbf{h}+a_0\}\} \\
&= \{ P_0(\mathbf{k}) (c_{\beta}),  a_0 , \{  a_0,  P_0(\mathbf{k}) (c_{\beta}), a_0\}\} = \{ P_0(\mathbf{k}) (c_{\beta}),  a_0, \langle  a_0 |  P_0(\mathbf{k}) (c_{\beta})\rangle a_0\}\\
&= \frac12 \langle P_0(\mathbf{k}) (c_{\beta})|  a_0 \rangle \langle  a_0 |  P_0(\mathbf{k}) (c_{\beta})\rangle a_0 
+ \frac12 \langle  a_0 |  P_0(\mathbf{k}) (c_{\beta})\rangle \langle a_0| a_0\rangle P_0(\mathbf{k}) (c_{\beta}) \\
& = \frac12 \langle c_{\beta} |  a \rangle \langle  a |  c_{\beta}\rangle a_0 + \frac12 \langle  a |  c_{\beta} \rangle \langle a_0| a_0\rangle P_0(\mathbf{k}) (c_{\beta}), 
\end{aligned} $$  with $$\| \{c_{\beta},a,\{a,c_{\beta},a\}\} \|\leq \frac12 \| \langle c_{\beta} |  a \rangle \langle  a |  c_{\beta}\rangle\| + \| \langle  a |  c_{\beta} \rangle \| \leq  2 \| \langle  a |  c_{\beta} \rangle \| \leq \frac{4 \beta}{(1 + 2 \beta)^{\frac{1}{2 \beta} + 1}}.$$

In the second case, by orthogonality, we obtain 
$$ \begin{aligned}
\{a, c_{\beta},\{c_{\beta},a,a\}\}  &=  \{\mathbf{h}+a_0, \mathbf{k} + P_0(\mathbf{h}) (c_{\beta}),\{\mathbf{k} + P_0(\mathbf{h}) (c_{\beta}),\mathbf{h}+a_0,\mathbf{h}+a_0\}\} \\
&= \{\mathbf{h}+a_0, \mathbf{k} + P_0(\mathbf{h}) (c_{\beta}),\{\mathbf{k},\mathbf{h},\mathbf{h}\} +  \{ P_0(\mathbf{h}) (c_{\beta}),a_0,a_0\}\} \\
&= \left\{\mathbf{h}+a_0, \mathbf{k} + P_0(\mathbf{h}) (c_{\beta}),\frac12 \mathbf{k}+  \{ P_0(\mathbf{h}) (c_{\beta}),a_0,a_0\}\right\} \\
&= \frac12 \left\{\mathbf{h}, \mathbf{k}, \mathbf{k} \right\} + \left\{ a_0, P_0(\mathbf{h}) (c_{\beta}),  \{ P_0(\mathbf{h}) (c_{\beta}),a_0,a_0\}\right\} \\
&= \frac14 \mathbf{h} + \left\{ a_0, P_0(\mathbf{h}) (c_{\beta}),  \{ P_0(\mathbf{h}) (c_{\beta}),a_0,a_0\}\right\},
\end{aligned} $$ and $$ \| \{a, c_{\beta},\{c_{\beta},a,a\}\}\| = \max\left\{\frac14 \|\mathbf{h} \|, \left\| \left\{ a_0, P_0(\mathbf{h}) (c_{\beta}),  \{ P_0(\mathbf{h}) (c_{\beta}),a_0,a_0\}\right\} \right\| \right\} \geq \frac14.$$ Taking $\beta$ close enough to $0$ to assure that $\gamma\  \frac{4 \beta}{(1 + 2 \beta)^{\frac{1}{2 \beta} + 1}} <\frac{1}{26}$, and considering the hypothesis \eqref{eq Le Page triples}, we obtain $$\frac{1}{26} > \gamma \ \! \| \{c_{\beta},a,\{a,c_{\beta},a\}\} \| \geq   \| \{a, c_{\beta},\{c_{\beta},a,a\}\}\|  \geq \frac14,$$ which is impossible. We have arrived to the desired contradiction, and hence $E$ must be commutative. 
\end{proof}

\medskip
\medskip

\textbf{Acknowledgements} L. Li was supported by National Natural Science Foundation of China (grant No. 12571143). A.M. Peralta has been supported by MICIU/AEI/10.13039/501100011033 and ERDF/EU through the grant PID2021-122126NB-C31, by ``Maria de Maeztu'' Excellence Unit IMAG, reference CEX2020-001105-M, and by the Bureau of Foreign Experts Affairs, MOHRSS, PRC China (grant S20250924). \smallskip

This work was partly carried out during a visit of A.M. Peralta to Nankai University and the Chern Institute of Mathematics (April 2025), whose hospitality is gratefully acknowledged.\medskip

We thank the referee for the useful comments and suggestions.

\subsection*{Statements and Declarations} 

All authors declare that they have no conflicts of interest to disclose.

\subsection*{Data availability}

There is no data associated with this submission.



\end{document}